\documentclass[12pt]{amsart}

\usepackage{amssymb}
\usepackage{graphicx}
\usepackage{enumerate}
\usepackage{multirow}
\usepackage{amsmath,color}
\usepackage{hyperref}
\usepackage{url}
\usepackage[colorinlistoftodos]{todonotes}
\usepackage{comment}

\newtheorem{thm}{Theorem}[section]

\newtheorem{lem}[thm]{Lemma}
\newtheorem{prop}[thm]{Proposition}

\numberwithin{equation}{section}
\theoremstyle{definition}

\title{Almost amorphic association schemes}
\author{Edwin R. van Dam}
\address{Department of Econometrics and O.R., Tilburg University, The Netherlands}
\email{Edwin.vanDam@uvt.nl}
\author{Jack H. Koolen}
\address{School of Mathematical Sciences, University of Science and Technology of China, Hefei, Anhui, PR China}
\address{Wen-Tsun Wu Key Laboratory of CAS, Hefei,Anhui, PR China}
\email{koolen@ustc.edu.cn}
\author{Yanzhen Xiong}
\address{College of Science, National University of Defense Technology, Changsha, Hunan, PR China}
\email{xiongyanzhen@nudt.edu.cn}
\date{\today} 

\begin{document}

\begin{abstract} An association scheme is called amorphic if every possible fusion of relations gives rise to another association scheme. In earlier work, we showed that if an association scheme has at most one relation that is neither strongly regular of Latin square type nor strongly regular of negative Latin square type, then it must be amorphic. We now construct non-amorphic $d$-class association schemes in which precisely two relations are not strongly regular of Latin square type or strongly regular of negative Latin square type, for any $d \geq 4$. We also raise the question whether different types of strongly regular graphs can coexist in an association scheme. Among some other results, we show that if one of the relations is a lattice graph, then any other strongly regular relation in the scheme must be of Latin square type.
\end{abstract}

\maketitle

\section{Introduction}
An association scheme is called amorphic if every possible fusion of relations gives rise to another association scheme. In earlier work \cite{DKX}, we showed that if an association has at most one relation that is neither strongly regular of Latin square type nor strongly regular of negative Latin square type, then it must be amorphic. We now construct non-amorphic $d$-class association schemes in which precisely two relations are not strongly regular of Latin square type or strongly regular of negative Latin square type, for any $d \geq 4$.

We also raise the question whether different types of strongly regular graphs can coexist in an association scheme. Among some other results, we show that if one of the relations is a lattice graph, then any other strongly regular relation in the scheme must be of Latin square type.

This paper is further organized as follows. After the preliminaries, we start with a key example in Section \ref{sec:sylvester} that is obtained from the Sylvester graph. 
In Section \ref{sec:combining}, we develop our main tool in Theorem \ref{thm:main} for possible constructions of association schemes with many Latin square type relations. In Section \ref{sec:constructions}, we obtain our main result in Theorem \ref{thm:mainresult} by fissioning relations in the association scheme of the Brouwer-Pasechnik graphs.
In Section \ref{sec:NLS}, we focus on association schemes with strongly regular graphs of negative Latin square type. In the final Section \ref{sec:types}, we raise the question of whether different types of strongly regular graphs can coexist in an association scheme. 

\section{Preliminaries}

\subsection{Association schemes}
A $d$-class (symmetric\footnote{All association schemes in this paper are symmetric}) {\em association scheme} $\mathbb{A}$ is a set of $d+1$ symmetric $01$-matrices $\{A_0=I,A_1,\dots,A_d\}$ of the same size $v$ that sum to the all-ones matrix $J$ and whose span forms an algebra --- the {\em Bose-Mesner algebra} --- with respect to ordinary multiplication.

The basis matrices $A_i, i =0,1,\dots,d$ are the adjacency matrices of the so-called {\em relations} of the scheme, but we will typically interpret them as graphs (except $A_0=I$).

As the Bose-Mesner algebra is closed under multiplication, there are so-called {\em intersection numbers} $p_{ij}^h$ satisfying
$A_iA_j = \sum_{h=0}^d p_{ij}^{h}A_h$ for all $h,i,j=0,1,\ldots,d$.

Because the basis matrices are symmetric, it follows that the Bose-Mesner algebra is commutative. It also follows that it has a basis of (minimal) {\em idempotents} $\{E_j : j=0,1,\ldots,d\}$ consisting of projection matrices onto the common eigenspaces of the basis matrices. One of these matrices is $E_0=\frac1v J$. Thus, there are {\em eigenvalues} $P_{ji}$ such that 
$$A_iE_j=P_{ji}E_j$$ for all $i,j=0,1,\ldots,d$.
Because the Bose-Mesner algebra is also closed under entrywise multiplication $\circ$ (and the matrices $A_i, i=0,1,\dots,d$ are $\circ$-idempotent), there are also {\em dual eigenvalues} $Q_{ij}$ such that
$$vE_j \circ A_i=Q_{ij}A_i$$ for all $i,j=0,1,\ldots,d$. The matrices $P$ and $Q$ are called the (first and second) {\em eigenmatrices} of the association scheme.

In particular, we have that $P_{j0}=1$ for all $j$ and $P_{0i}=p_{ii}^0=:k_i$, the {\em valency} of relation $A_i$. The other eigenvalues $P_{ji}, j \neq 0$ are called the {\em restricted eigenvalues} of $A_i$ ($i=0,1,\ldots,d$).

Dually, we have that $Q_{i0}=1$ for all $i$ and $Q_{0j}=:m_j$, the {\em rank} of idempotent $E_j$
($j=0,1,\ldots,d$). 

If $\{I,B_1,...B_e\}$ is an association scheme $\mathbb{B}$ in which each of the $B_i$ is a sum of matrices in the association scheme $\mathbb{A}=\{I,A_1,\dots,A_d\}$, then we say that $\mathbb{B}$ is a {\em fusion} scheme of $\mathbb{A}$, and $\mathbb{A}$ is a {\em fission} scheme of $\mathbb{B}$. If every possible way of taking sums of matrices in $\mathbb{A}$ gives rise to a fusion scheme, then we call $\mathbb{B}$ {\em amorphic}.

For more basic properties of association schemes, we refer to \cite{BBIT}. 

\subsection{Strongly regular graphs}
A graph is called strongly regular if it is one of the relations in a $2$-class association scheme. For background on strongly regular graphs, we refer to \cite{BvMSRG}.

A strongly regular graph with $v$ vertices and valency $k$ is of \emph{Latin square type} if $v = n^2$, $k = t(n-1)$, and the restricted eigenvalues are $n-t$, and $-t$, for some positive integers $n$ and $t$. It is of {\em negative Latin square type} if $n$ and $t$ are negative integers. A {\em conference graph} is a strongly regular graph with $k=\frac{1}{2}(v-1)$ and restricted eigenvalues $-\frac12 \pm \frac12 \sqrt{v}$. The conference graphs on a square number of vertices $v$ are precisely the graphs that are both of Latin square type and of negative Latin square type. We say that a strongly regular graph is of \emph{strictly} (negative) Latin square type if it is of (negative) Latin square type but it is not a conference graph. 

In an amorphic $d$-class association scheme with $d \ge 3$, all relations are strongly regular of Latin square type or all are strongly regular of negative Latin square type. For background on amorphic association schemes, we refer to \cite{vDM2010}.

Finally, we will use the following two lemmas.

\begin{lem}\cite[Lemma 2.4]{DKX}\label{lem:SRGkandaNLS}
    Let $\Gamma$ be a strongly regular graph on $n^2$ vertices and valency $k$, having a restricted eigenvalue $a$, such that $k=-a(n-1)$. Then $\Gamma$ is of  Latin square type if $n$ is positive and $\Gamma$ is of negative Latin square type if $n$ is negative.
\end{lem}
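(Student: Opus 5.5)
The plan is to determine the second restricted eigenvalue from the first, using only the standard trace identities for a strongly regular graph. Write $A$ for the adjacency matrix of $\Gamma$, $v=n^2$ for the number of vertices, and $a,b$ for the two restricted eigenvalues, where $a$ is the one given in the statement. The aim is to show $b=n+a$. Once that holds, putting $t=-a$ gives $v=n^2$, $k=t(n-1)$ and restricted eigenvalues $-t$ and $n-t$, which is exactly the Latin square parametrization.

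First I record the degenerate cases. Since $\Gamma$ is a genuine strongly regular graph we have $k>0$. Then $k=-a(n-1)$ forces $a\neq 0$ and $n\neq 1$. Here $n$ is an integer with $v=n^2$. The eigenvalue $a=-k/(n-1)$ is rational and is an algebraic integer, so $a$ is an integer.

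The key identity is obtained as follows. Every eigenvalue $\theta$ of $A$ other than $k$ equals $a$ or $b$, so the matrix $(A-aI)(A-bI)$ has trace $(k-a)(k-b)$, coming from the eigenvalue $k$ alone. Expanding the product instead gives
$$\operatorname{tr}A^2-(a+b)\operatorname{tr}A+abv=vk+abv.$$
Hence $(k-a)(k-b)=v(k+ab)$; this is the familiar relation $v\mu=(k-a)(k-b)$ with $\mu=k+ab$.

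Next I substitute $k=-a(n-1)$ and $v=n^2$. This gives $k-a=-an$, so the identity becomes $-an(k-b)=n^2\bigl(-a(n-1)+ab\bigr)$. Dividing by $-an\neq 0$ yields $k-b=n(n-1)-nb$. Substituting for $k$ once more gives $b(n-1)=(n-1)(n+a)$, and since $n\neq 1$ we conclude $b=n+a$. Thus, with $t=-a$, the restricted eigenvalues are $-t$ and $n-t$, and $k=t(n-1)$.

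It remains to read off the signs. If $n>0$, then $n-1\geq 1$ (as $n\ne1$), so $k>0$ forces $a<0$, that is, $t$ is a positive integer: Latin square type. If $n<0$, then $n-1<0$, so $a>0$ and $t$ is a negative integer: negative Latin square type. The only step needing care is the bookkeeping that rules out division by zero; the trace identity carries the whole argument.
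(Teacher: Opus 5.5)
Your proof is correct. The paper does not prove this lemma; it imports it from \cite{DKX}. Your argument is a complete self-contained proof: you derive the second restricted eigenvalue $b=n+a$ from the standard identity $v\mu=(k-a)(k-b)$ with $\mu=k+ab$, then use integrality of the rational algebraic integer $a$ and a sign analysis based on $k>0$.

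Two small points are worth tightening.

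\begin{itemize}
\item \textbf{The trace step.} Justify it by noting that $A\mathbf{1}=k\mathbf{1}$ and that $A$ has only the eigenvalues $a,b$ on $\mathbf{1}^{\perp}$. As written, ``every eigenvalue other than $k$ equals $a$ or $b$, \ldots\ coming from the eigenvalue $k$ alone'' tacitly assumes $k$ is simple. The lemma also covers the disconnected case, for example a square spread, where $a=-1$, $b=k=n-1$ and $k$ has multiplicity $n$. The identity still holds there because $(k-a)(k-b)=0$, but your stated justification does not show it.
\item \textbf{The assumption that $n$ is an integer.} You are right to make this explicit, because it is a genuine implicit hypothesis of the lemma. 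For the pentagon, $n=\sqrt5$ and $a=(-1-\sqrt5)/2$ satisfy $k=-a(n-1)$, yet the graph is not of Latin square type in the paper's sense.
\end{itemize}
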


A {\em square spread} in a graph on $n^2$ vertices is a subgraph that is a disjoint union of $n$ cliques of order $n$.  

\begin{lem}\label{lem:spread}
Let $\Gamma$ be a strongly regular graph of Latin square type on $n^2$ vertices that contains a square spread. Then the graph that is obtained by removing the edges of the square spread from $\Gamma$ is strongly regular of Latin square type.
\end{lem}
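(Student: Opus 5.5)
Let $A$ be the adjacency matrix of $\Gamma$, with $v=n^2$, $k=t(n-1)$ and restricted eigenvalues $n-t$ and $-t$. Let $S$ be the adjacency matrix of the square spread. Then $S+I = I_n\otimes J_n$ up to a permutation of the vertices, and $S$ is a subgraph of $\Gamma$. We want to show that $A'=A-S$ is strongly regular on $n^2$ vertices with valency $k'=(t-1)(n-1)$ and restricted eigenvalues $n-(t-1)$ and $-(t-1)$. Lemma \ref{lem:SRGkandaNLS} would then give the Latin square type. If $t=1$, then $\Gamma$ is $nK_n$, which must equal the spread, and $A'=0$; this degenerate case has to be excluded or treated separately.

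The plan is to use the spectral characterization: a connected regular graph with exactly two restricted eigenvalues is strongly regular. It therefore suffices to show that $A'$ is regular of valency $(t-1)(n-1)$ and that $A'$ has only the eigenvalues $n-t+1$ and $-t+1$ on $\mathbf{1}^\perp$. Regularity is immediate. The key idea is a Hoffman-type clique argument. Since $\Gamma$ has smallest eigenvalue $-t$, the Delsarte–Hoffman bound gives that every clique has size at most $1-k/(-t)=1+(n-1)=n$. So each spread clique $C$ is a Delsarte clique, and a vertex outside $C$ therefore has exactly $k/n\cdot\ldots$, more precisely the constant number $-k/(-t)\cdot\frac{1}{?}$ of neighbours in $C$. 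I would compute this via the standard fact that for a Delsarte clique $C$, the vector $\chi_C-\frac{|C|}{v}\mathbf{1}$ lies in the $-t$ eigenspace. From this, each outside vertex has exactly $t$ neighbours in $C$, since a vertex outside $C$ has $k|C|/v\cdot$ corrected count $=t$.

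Consequently, for each spread clique $C_i$, the vector $u_i=\chi_{C_i}-\frac1n\mathbf{1}$ is an eigenvector of $A$ with eigenvalue $-t$. These vectors span a space $U$ of dimension $n-1$, namely the span of the $\chi_{C_i}$ intersected with $\mathbf{1}^\perp$. On $U$, $S$ acts as $n-1$, since $Su_i=(n-1)\chi_{C_i}-\frac{n-1}{n}\mathbf 1=(n-1)u_i$. On the orthogonal complement $W$ of $\operatorname{span}\{\chi_{C_i}\}$, $S$ acts as $-1$. Because $A$ commutes with the projection onto $\operatorname{span}\{\chi_{C_i}\}$ (which $A$ preserves, as just shown), $A$ and $S$ are simultaneously diagonalizable. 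We then read off the eigenvalues of $A'$. On $U$ it has eigenvalue $-t-(n-1)$. On $W$, which is $A$-invariant, it has eigenvalues $n-t+1$ and $-t+1$.

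This exposes a problem: the eigenvalue $-t-n+1$ on $U$ is not of the desired form. So the expected statement must hold with the roles of the eigenvalues arranged differently, and I would recheck the claim that $u_i$ lies in the $-t$ eigenspace. In fact, for a Delsarte clique, $\chi_C$ projects onto the eigenspace of the smallest eigenvalue only after subtracting $\mathbf{1}$ appropriately. Computing $A\chi_C$ directly gives: on $C$ the value $n-1$, and outside $C$ the value $t$ (the count derived above). Hence $A\chi_{C_i}=(n-1-t)\chi_{C_i}+t\mathbf 1$, so $u_i$ has eigenvalue $n-1-t$. Hmm, this must equal $n-t$ or $-t$, which forces a correction: the outside count is really $t-1$, and then $u_i$ is an eigenvector for $n-t$. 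With that, on $U$ the matrix $A'$ has eigenvalue $n-t-(n-1)=-(t-1)$, which is good. The main obstacle, and the step to do carefully, is therefore the exact Delsarte count. A vertex outside a maximum clique of size $n=1+k/t$ has exactly $k/n\cdot\ldots$ neighbours in it, and I will fix this by the standard counting that uses $\lambda=\mu+(n-t)+(-t)$ and $\mu=t(t-1)$. This yields outside count $t-1$ in the Latin square parameters. Once the count is settled, $A'$ has exactly the restricted eigenvalues $n-t+1$ and $-t+1$. Connectedness for $t\ge 2$ (or the fact that a regular graph with two restricted eigenvalues is strongly regular in the possibly disconnected sense) then finishes the proof via Lemma \ref{lem:SRGkandaNLS}.
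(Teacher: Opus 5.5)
Your final argument is correct, but it takes a different route from the paper. The paper gives no argument of its own; it defers to Haemers and Tonchev \cite{HT}, who treat spreads of Delsarte cliques in strongly regular graphs and the imprimitive $3$-class schemes these produce.

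You verify the Latin square case directly:
\begin{itemize}
\item each spread clique meets the Delsarte bound $1-k/s=1+k/t=n$;
\item every vertex outside a spread clique $C_i$ has exactly $t-1$ neighbours in it, so $A\chi_{C_i}=(n-t)\chi_{C_i}+(t-1)\mathbf{1}$;
\item hence $A$ preserves $\operatorname{span}\{\chi_{C_i}\}$ and commutes with $S$, since $S+I=\sum_i\chi_{C_i}\chi_{C_i}^{\top}$;
\item so $A-S$ has eigenvalue $-(t-1)$ on $U$, and $n-t+1$ or $-t+1$ on $W$.
\end{itemize}
This gives a self-contained proof with explicit multiplicities $(t-1)(n-1)$ and $(n-1)(n+2-t)$. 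It also shows where the Latin square hypothesis enters: the eigenvalue $r-(|C|-1)$ of $A-S$ on $U$ coincides with $s+1$ exactly because $r-s=n=|C|$. Finally, it exposes the hypothesis $t\ge 2$ (for $t=1$ nothing is left), which the statement leaves implicit. The citation, in exchange, places the lemma within the general theory of spreads.

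When you write it up, delete the false start rather than correcting it in passing. For a Delsarte clique $C$, the vector $\chi_C-\frac{|C|}{v}\mathbf{1}$ lies in the eigenspace of the \emph{largest} restricted eigenvalue $n-t$, not of $-t$. The reason is that $C$ is a coclique meeting the ratio bound in the complement, whose smallest eigenvalue is $-1-(n-t)$. Correspondingly, the outside count is $t-1$, not $t$.

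Also, do not infer the count from the requirement that $u_i$ be an eigenvector; prove constancy directly. For $x\notin C$, let $c_x$ be its number of neighbours in $C$. Then $\sum_x c_x=n(k-n+1)=n(n-1)(t-1)$. Counting ordered adjacent pairs in $C$ gives $\sum_x c_x(c_x-1)=n(n-1)(\lambda-n+2)=n(n-1)(t-1)(t-2)$, using $\lambda=n+t^2-3t$. There are $n(n-1)$ such vertices $x$, so the mean of $c_x$ is $t-1$ and the mean of $c_x^2$ is $(t-1)^2$. The variance is therefore zero and $c_x=t-1$ for all $x$.

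With that in place, the remaining steps are correct: the commutation $AS=SA$, the eigenvalue count on $\langle\mathbf 1\rangle\oplus U\oplus W$, and the conclusion that a regular graph (neither empty nor complete) with two restricted eigenvalues is strongly regular.
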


\begin{proof}
    This follows from \cite{HT}.
\end{proof}

\section{The Sylvester graph}\label{sec:sylvester}
Recall that we are interested in constructing $d$-class association schemes in which there are precisely two relations that are not strongly regular of Latin square type or negative Latin square type.
It is clear that for $d=2$, there are zillions of such schemes. Also for $d=3$, there are infinitely many examples known, as we shall see in Section \ref{sec:bilinear}. The problem becomes interesting for $d \geq 4$. A first example is a $4$-class scheme related to the Sylvester graph, the unique distance-regular graph with intersection array $\{5,4,2;1,1,4\}$ and spectrum $\{5^1,2^{16},-1^{10},-3^9\}$.

Consider the generalized quadrangle GQ$(2,2)$. It has $6$ spreads (a set of lines such that each point is on precisely one line) and dually $6$ ovoids (a set of points such that each line contains exactly one point). It can be verified that two different spreads have precisely one line in common, and two different ovoids have precisely one point in common\footnote{The GQ$(2,2)$ has as points all pairs of a set of $6$ elements (Sylvester's duads) and as lines all triples of disjoint pairs (perfect matchings of $K_6$; Sylvester's synthemes). The ovoids are the sets of (5) pairs through a fixed element and the spreads are the $1$-factorizations of $K_6$.}. Let $\mathcal{S}$ be the set of spreads and $\mathcal{O}$ be the set of ovoids. 

The vertex set of the $4$-class scheme will be $\mathcal{S} \times \mathcal{O}$. 
The first relation is the Sylvester graph: $(S_1,O_1)$ is adjacent to $(S_2,O_2)$ if $S_1 \neq S_2$, $O_1 \neq O_2$, and the unique point in both $O_1$ and $O_2$ is on the unique line that is in both $S_1$ and $S_2$, see \cite[p.395]{BCN}.
Then we have two relations of Latin square type; both are disjoint unions of cliques: one is given by cliques $\{(S,O) : O \in \mathcal{O}\}$ and the other by  $\{(S,O) : S \in \mathcal{S}\}$.
These two relations together actually form the distance-$3$ graph of the Sylvester graph, and so the remaining relation is its distance-$2$ graph.

The thus obtained $4$-class scheme (indeed, it is) has first eigenmatrix 
$$P=\begin{bmatrix}
    1& 5& 20& 5& 5\\
    1& 2& -1 &-1 &-1 \\
    1& -3& 4 & -1& -1\\
    1& -1& -4& 5 & -1 \\
    1& -1& -4& -1& 5     
\end{bmatrix}.$$

Thus, this $4$-class association scheme is a fission scheme of the $3$-class scheme of the Sylvester graph. In this $3$-class association scheme, one of its relations is strongly regular of Latin square type, and we managed to fission this relation into multiple relations (of the same type). This is the key to almost all of our constructions.

\section{Combining common fission schemes}\label{sec:combining}

Our main tool is that if we have two fission schemes of a 2-class scheme that fission in such a way that also each of the distinct nontrivial idempotents fissions, then the common fission is also an association scheme.

\begin{lem}\label{lem:combine}
    Let $\{I,A,B\}$ be a $2$-class association scheme with nontrivial idempotents $E$ and $F$. Let $A_i$, $i=1,\dots,e$ and $E_i$, $i=1,\dots,e$ be such that $\{I,A_1,\dots,A_e,B\}$ is an association scheme with nontrivial idempotents $F$ and $E_i$, $i=1,\dots,e$, and let $B_j$, $j=1,\dots,f$ and $F_j$, $j=1,\dots,f$ be such that $\{I,B_1,\dots,B_f,A\}$ is an association scheme with nontrivial idempotents $E$ and $F_j$, $j=1,\dots,f$. Then $\{I,A_1,\dots,A_e,B_1,\dots,B_f\}$ is an association scheme with nontrivial idempotents $E_i$, $i=1,\dots,e$ and $F_j$, $j=1,\dots,f$.
\end{lem}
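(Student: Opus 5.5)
The plan is to show directly that the span $\mathcal{M}$ of $I, A_1,\dots,A_e,B_1,\dots,B_f$ coincides with the span $\mathcal{N}$ of the orthogonal idempotents $E_0=\frac1vJ, E_1,\dots,E_e,F_1,\dots,F_f$. Once this is established, $\mathcal{M}$ is closed under ordinary multiplication, because $\mathcal{N}$ is an algebra spanned by pairwise orthogonal idempotents. The matrices $I,A_i,B_j$ are symmetric $01$-matrices that sum to $J$, since $\sum A_i=A$, $\sum B_j=B$ and $I+A+B=J$. This gives an association scheme, and its minimal idempotents are the stated ones.

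We first record the facts coming from the two given schemes. From $\{I,A_1,\dots,A_e,B\}$ we have $\sum_i E_i=E$. Indeed, the idempotents of this fission scheme are $E_0$, $F$ and the $E_i$, and these sum to $I=E_0+E+F$. In the same way, the second scheme gives $\sum_j F_j=F$. Hence $E_iF_j=E_iEFF_j=0$, since $EF=0$. So the $e+f+1$ matrices $E_0,E_i,F_j$ are pairwise orthogonal idempotents, $\mathcal{N}$ is a commutative algebra of dimension $e+f+1$, and $\dim\mathcal{M}=e+f+1$ as well, because the $01$-matrices involved have disjoint supports.

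The key step is to show $A_i\in\mathcal{N}$, and symmetrically $B_j\in\mathcal{N}$. In the first scheme we can write
$$A_i=\alpha_iE_0+\sum_{h}P_{hi}E_h+\beta_iF,$$
where $\beta_i$ is the eigenvalue of $A_i$ on $F$. Since $F=\sum_jF_j$, the right-hand side lies in $\mathcal{N}$. By the symmetric argument, $B_j$ is a combination of $E_0$, $F_j$ and $E=\sum_iE_i$, so $B_j\in\mathcal{N}$ as well. Also $I=E_0+\sum E_i+\sum F_j\in\mathcal{N}$. Therefore $\mathcal{M}\subseteq\mathcal{N}$, and equality follows from the dimension count.

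Finally, the idempotents $E_i$ and $F_j$ are nonzero: they are minimal idempotents of the given schemes, which have exactly $e+1$ and $f+1$ nontrivial classes respectively. This is the one point I would double-check, namely that the dimension counts $e+2=1+e+1$ in each given scheme really force the listed idempotents to be distinct and nonzero. That is immediate from the hypothesis that they are the idempotents of those schemes. With this, $\{E_0,E_i,F_j\}$ is the basis of primitive idempotents of the common fission scheme, and the proof is complete. I do not expect any real obstacle; the only subtle point is the orthogonality $E_iF_j=0$, which comes from $\sum_i E_i=E$ and $\sum_j F_j=F$.
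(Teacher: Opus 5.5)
Your proof is correct and follows essentially the same route as the paper. Both arguments use $\sum_i E_i=E$ and $\sum_j F_j=F$ to obtain $E_iF_j=0$ and $A_i\circ B_j=0$, and then show that the span of $I,A_1,\dots,A_e,B_1,\dots,B_f$ equals the span of $J,E_1,\dots,E_e,F_1,\dots,F_f$ by expanding each matrix in the bases of the two given Bose--Mesner algebras. The differences are minor: you get the reverse inclusion from a dimension count and read off closure under multiplication directly from the definition, whereas the paper notes the entrywise closure of one span and the ordinary closure of the other and then cites \cite[Thm.~2.6.1]{BCN}.
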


\begin{proof}
Let $\mathcal{A}=\langle I,A_1,\dots,A_e,B\rangle$ and $\mathcal{B}=\langle I,B_1,\dots,B_f,A\rangle$ be the Bose-Mesner algebras of the given fission schemes of $\{I,A,B\}$. Then $\mathcal{A}=\langle J,E_1,\dots,E_e,F\rangle$
and $\mathcal{B}=\langle J,F_1,\dots,F_f,E\rangle$.

We note that $A=A_1+\cdots+ A_e$, $B=B_1+\cdots+ B_f$, $E=E_1+\cdots+ E_e$, and $F=F_1+\cdots+ F_f$. From this, it follows that $A_i\circ B_j=A_i\circ A \circ B_j=0$ and $E_iF_j=E_iEF_j=0$ for $i=1,\dots,e$ and $j=1,\dots,f$.

Let $$\mathcal{C}_{AB}=\langle I,A_1,\dots,A_e,B_1,\dots,B_f\rangle$$ and 
$$\mathcal{C}_{EF}=\langle J,E_1,\dots,E_e,F_1,\dots,F_f\rangle.$$ 

From the above, it follows that $\mathcal{C}_{AB}$ is closed under entrywise multiplication and contains $I$, and that $\mathcal{C}_{EF}$ is closed under ordinary matrix multiplication and contains $J$.

Using the different bases of the Bose-Mesner algebras $\mathcal{A}$ and $\mathcal{B}$, it follows easily that $\mathcal{C}_{AB}=\mathcal{C}_{EF}$. Thus, by \cite[Thm.~2.6.1]{BCN}, $\mathcal{C}_{AB}$ is the Bose-Mesner algebra of an association scheme with nontrivial relations $A_1,\dots,A_e,B_1,\dots,B_f$ and nontrivial idempotents $E_1,\dots,E_e,F_1,\dots,F_f$. 
\end{proof}

We note that this lemma can be generalized easily to common fission schemes of larger-class schemes. For us, the most relevant consequence of the above lemma is the following. 

\begin{thm}\label{thm:main}
    Let $\{I,A_1,\dots,A_e,B\}$ be an association scheme on $n^2$ vertices with nontrivial idempotents $F$ and $E_i$, $i=1,\dots,e$, where $B$ is strongly regular of Latin square type or negative Latin square type with valency $k$, such that $F$ has multiplicity $k$ and $BF=(n-t)F$, where $t=k/(n-1)$.

    If $k \neq \frac12 (n^2-1)$ and $B$ can be decomposed into strongly regular graphs $B_1,\dots,B_f$ of the same type as $B$, then 
    $\{I,A_1,\dots,A_e,B_1,\dots,B_f\}$ is an association scheme.
\end{thm}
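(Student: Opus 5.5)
We plan to derive the theorem from Lemma~\ref{lem:combine}, applied to the $2$-class scheme $\{I,A,B\}$ with $A=A_1+\cdots+A_e = J-I-B$. By hypothesis, $\{I,A_1,\dots,A_e,B\}$ is a fission of $\{I,A,B\}$ with nontrivial idempotents $F$ and $E_1,\dots,E_e$. Here $F$ is the eigenspace of $B$ for the eigenvalue $n-t$, and $E=E_1+\cdots+E_e$ is the eigenspace for $-t$; both are idempotents of the $2$-class scheme, since $BE_i$ is a multiple of $E_i$. What remains is to produce the second fission $\{I,B_1,\dots,B_f,A\}$, whose nontrivial idempotents should be $E$ together with idempotents $F_1,\dots,F_f$ summing to $F$.

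For the second fission we use the standard fact that edge-disjoint strongly regular graphs of the same (negative) Latin square type on $n^2$ vertices, with common parameter $n$ and valencies $t_j(n-1)$, commute and share eigenspaces. This is the amorphic-scheme construction: together with $I$ they form an amorphic scheme, provided the union is again of that type. Concretely, we proceed as follows.

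\begin{enumerate}[(i)]
\item Write $B_j$ with valency $k_j=t_j(n-1)$, restricted eigenvalues $n-t_j$ and $-t_j$, and $\sum t_j=t$.
\item Check that the $B_j$ pairwise commute and that $\{I,B_1,\dots,B_f,J-I-B\}$ is an amorphic association scheme. We would cite the known characterization (van Dam, \cite{vDM2010}): a partition of the complete graph into $f+1\ge 3$ strongly regular graphs all of Latin square type, or all of negative Latin square type, with the same $n$, yields an amorphic scheme. Here the complement $A$ is of the same type with parameter $n+1-t$ (or the analogue in the negative case), so the whole partition qualifies.
\item Identify the idempotents. In this amorphic scheme, the all-$(-t_j)$ eigenspace where every $B_j$ acts as $-t_j$ is exactly where $B$ acts as $-t$, so it equals $E$. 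The remaining eigenspaces $F_j$ (on which $B_j$ acts as $n-t_j$ and the others as $-t_{j'}$) sum to the eigenspace of $B$ for $n-t$, namely $F$.
\end{enumerate}

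Lemma~\ref{lem:combine} then gives the result.

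The main obstacle is step (iii), which requires matching $F$ with the eigenspace for $n-t$ and $E$ with the eigenspace for $-t$. This matching needs the two eigenvalues of $B$ to be distinguishable with the right multiplicities. Multiplicities in a Latin square type graph are $t(n-1)$ for $n-t$ and $(n-1)(n+1-t)$ for $-t$, so the hypothesis that $F$ has multiplicity $k=t(n-1)$ and $BF=(n-t)F$ pins down $F$ correctly. The hypothesis $k\neq\frac12(n^2-1)$ excludes conference graphs. For a conference graph the Latin and negative Latin square descriptions swap the roles of the eigenvalues, so the decomposition into $B_j$ "of the same type" might pair $F$ with the wrong eigenspace. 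Similarly, in the amorphic-scheme check we must make sure all $B_j$ use the same sign convention for $n$ as $B$, and avoid the case where some $B_j$ is a conference graph of the opposite type. We would handle this by invoking Lemma~\ref{lem:SRGkandaNLS} on each $B_j$ with $k_j=-a(n-1)$ for a fixed $n$.
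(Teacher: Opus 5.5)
Your proposal is correct and follows the paper's proof. You form the $2$-class scheme $\{I,A,B\}$, obtain the amorphic scheme $\{I,A,B_1,\dots,B_f\}$ from the decomposition of the complete graph into strongly regular graphs of one type, identify $E=F_0$ and $F=F_1+\cdots+F_f$, and conclude with Lemma~\ref{lem:combine}. The only difference is in that identification step. You match idempotents through the eigenvalues $-t$ and $n-t$ of $B$, which needs the sign-of-$n$ consistency argument you sketch. The paper instead compares multiplicities ($n^2-1-k$ for $F_0$ versus $k$ for $F$). These differ exactly because $k\neq\frac12(n^2-1)$ and do not depend on the sign convention, so the paper's route is slightly cleaner.
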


\begin{proof}
We first note that because $F$ has multiplicity $k$ for eigenvalue $n-t$ of $B$, it follows that $B$ has eigenvalue $-t$ for the idempotents $E_i$, for $i=1,\dots,e$. Thus, if we let $A=A_1+\cdots+A_e$ and $E=E_1+\cdots+E_e$, then $\{I,A,B\}$ is a $2$-class association scheme with nontrivial idempotents $E$ and $F$.

Since $A$ is the complement of $B$, it is a strongly regular graph of the same type as $B$. Thus, the complete graph can now be decomposed into strongly regular graphs $A$ and $B_1,\dots,B_f$, that are all of the same type. By \cite[Thm.~3]{D3}, we then obtain that $\{I,A,B_1,\dots,B_f\}$ is an amorphic association scheme. 
Next, let $t_i$ be such that $B_i$ has valency $k_i=t_i(n-1)$ for $i=1,\dots,f$. 
According to \cite[Prop.~1]{vDM2010}, the first eigenmatrix of the amorphic scheme can then be written as

$$P=\begin{bmatrix}
1& (n+1-t)(n-1)& t_1(n-1) &t_2(n-1)& \dots & t_f(n-1)\\
1& t-1& -t_1 &-t_2 & \dots & -t_f \\
1 &-(n+1-t) &n-t_1& -t_2 &\dots&-t_f\\
1 &-(n+1-t)& -t_1 &n-t_2 &\dots &-t_f\\
\vdots &\vdots &\vdots &\vdots &\ddots &\vdots\\
1& -(n+1-t) &-t_1& -t_2 &\dots &n-t_f 
\end{bmatrix}.$$

Now let $F_0,F_1,\dots,F_f$ be the nontrivial idempotents of this scheme, where $F_0$ is the one having eigenvalue $t-1$ for $A$. Then $F_0$ has multiplicity $n^2-1-k$ and $F_i$ has multiplicity $t_i(n-1)$ for $i=1,\dots,f$. Now it follows that the fusion scheme $\{I,A,B\}$ has nontrivial idempotents $F_0$ and $F_1+\cdots+F_f$.
Because $k \neq (n^2-1)/2$, it follows from the multiplicities that $E=F_0$ and $F=F_1+\cdots+F_f$.

Thus, we have all the conditions to apply Lemma \ref{lem:combine}, and conclude that 
$\{I,A_1,\dots,A_e,B_1,\dots,B_f\}$ is an association scheme.
\end{proof}

We recall that the condition that $F$ has multiplicity $k$ and $BF=(n-t)F$ is equivalent to the condition that the scheme idempotent $F$ is the same as the spectral ($2$-class) idempotent, because in a strongly regular graph of Latin square type or negative Latin square type, eigenvalue $n-t$ has multiplicity $k$. In Section \ref{sec:NLS}, we will argue that we need this condition.

We also observe that the condition $k\neq(n^2-1)/2$ is necessary because the $2$-class association scheme of the Paley graph on $81$ vertices (with valencies 40 and 40) has two $3$-class fission schemes whose common fission is not an association scheme. The first $3$-class scheme is a fusion of the $4$-class cyclotomic scheme, with relations having valencies 40, 20, and 20 (all are strongly regular of negative Latin square type). The second $3$-class scheme is a fusion of the $10$-class cyclotomic scheme, with relations having valencies 40, 32, and 8 (all are strongly regular of Latin square type). Their common fission is a so-called strongly regular decomposition of the complete graph into graphs with valencies 20, 20, 32, and 8, see \cite{D3}, but it is clearly not an association scheme because it has strongly regular relations of (strictly) different types.

\section{Constructions}\label{sec:constructions}

In Sections \ref{sec:bilinear}-\ref{sec:brouwer}, we discuss certain distance-regular graphs with diameter $3$ for which the distance-$3$ relation is strongly regular of Latin square type. They all have intersection array $\{q^3-1,q^3-q,q^3-q^2+1;1,q,q^2-1\}$. The corresponding association scheme has first eigenmatrix 

$$P=\begin{bmatrix}
    1& q^3-1& (q^2-1)(q^3-1)& (q^3-q^2+1)(q^3-1)\\
    1& q^2-1& q^3-2q^2+1 &-(q^3-q^2+1) \\
    1& -q^2-1& q^3+1& -(q^3-q^2+1)\\
    1& -1& -q^2+1 & q^2-1       
\end{bmatrix}$$
and the multiplicity of the idempotent corresponding to the bottom row, $F$ say, equals $(q^3-q^2+1)(q^3-1)$. Thus, Theorem \ref{thm:main} applies if we can fission the distance-$3$ relation into multiple strongly regular graphs of Latin square type.

\subsection{The symmetric bilinear forms graph}\label{sec:bilinear}

Let $q$ be an even prime power. Let $V$ be the vector space of $3 \times 3$ symmetric matrices over $GF(q)$ and let two matrices in $V$ be adjacent in $\Gamma$ whenever their difference has rank $1$. Then $\Gamma$ is distance-regular with diameter $3$ with the above intersection array. 
Two matrices are at distance $2$ if their difference has rank $2$ but is not alternating (i.e., it has no zero diagonal). Accordingly, $\Gamma_3$ splits naturally: two matrices are adjacent in $A_4$ (say) if their difference is alternating, i.e, it has rank $2$ and zero diagonal. Because the alternating matrices form a ($3$-dimensional) subspace of $V$, it follows that $A_4$ is a disjoint union of $q^3$-cliques (a square spread). Thus, we obtain a $4$-class scheme with precisely two relations that are strongly regular of Latin square type. See also the diagram in \cite[p.286]{BCN}.

\subsection{Brouwer-Pasechnik graphs}\label{sec:brouwer}
Let $q$ be a prime power. Brouwer and Pasechnik \cite[\S 3]{BP2011} constructed a distance-regular graph $Z$ with diameter $3$ on vertex set $W \times W$, where $W$ is the vector space $GF(q)^3$. Vertex
$(u,u')$ is adjacent to $(v,v')$ in $Z$ if $v'-u'=u \times v$, where $\times$ is an outer product on $W$.

Again, our aim is to fission the distance-$3$ graph $Z_3$ by obtaining multiple square spreads ($q^3$ mutually disjoint $q^3$-cliques) that share no edges in $Z_3$. Now it is not hard to show that $(w,w')$ is at distance $3$ from $(u,u')$ if $w=u$ (and $w' \neq u'$) or $w'-u'$ is not orthogonal to $w-u$.

It is then clear that we have at least one spread:
$$S_0=\{\{(w,w'): w' \in W\}: w \in W\}$$ (that is, the cliques with constant first coordinate).
Since we want different spreads not to share any edges, the other spreads will have cliques where all first coordinates will be distinct. Moreover, because in cliques we want that $w'-u'$ is not orthogonal to $w-u$, this also holds for the second coordinates.

Suppose now that we have one such clique $C$. It then contains vertices $(w,\varphi(w)), w \in W$, for some bijection $\varphi$.

Now it is relatively easy to show that $S_{\alpha}$, where
$$S_{\alpha}=\{\{(w,\alpha \varphi(w)+y):  w \in W \}: y \in W\}$$ 
is a spread for each $\alpha \in GF(q)^*$. Moreover, cliques in different spreads intersect in precisely one vertex, and hence share no edges. 
In total, we have $q$ square spreads, and hence, by repeatedly using Lemma \ref{lem:spread}, we can split $Z_3$ into $q+1$ new relations that are strongly regular of Latin square type. By fusion, we can thus obtain $d$-class association schemes with precisely $d-2$ relations that are of Latin square type, for $3 \leq d \leq q+3$. 

Thus, what remains is to find a clique $C$ in the graph, $Z_4$ say, obtained by removing the edges of $S_0$ from $Z_3$.

But the complement of $Z_4$ is a well-known graph, i.e., it is the affine polar graph $VO_6^+(q)$, see \cite[\S 3.3.1]{BvMSRG}, and a coclique of size $q^3$ in this graph is another well-known object, i.e., it corresponds to an ovoid in the hyperbolic quadric $Q^+(7,q)$, see \cite[(3)]{Ovoids}. We also refer to \cite{Ovoids,BKM11} for an overview of the known constructions of such ovoids. For us, it is relevant that these exist for infinitely many $q$. Thus, we have the following result.


\begin{thm}\label{thm:mainresult}
For all $d>1$, there exists a $d$-class association scheme with exactly $d-2$ relations of Latin square type.
\end{thm}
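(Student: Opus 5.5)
The plan is to build the required scheme from the Brouwer--Pasechnik graph $Z$ on $W\times W$, $W=GF(q)^3$, for a prime power $q$ that is large compared with $d$. Small $d$ are handled separately. For $d=2$ any strongly regular graph that is not of (negative) Latin square type and whose complement is also not of either type will do. For $d=3$ we use the $4$-class scheme of Section~\ref{sec:bilinear}, fusing away the alternating relation, or equivalently the $3$-class scheme of $Z$ itself. The real content is therefore $d\ge 4$, and we treat that case uniformly.

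First fix $q$ with $q+3\ge d$ such that the hyperbolic quadric $Q^+(7,q)$ has an ovoid; by \cite{Ovoids,BKM11} there are infinitely many such $q$. Through \cite[(3)]{Ovoids} this ovoid gives a coclique of size $q^3$ in $VO_6^+(q)$, which is a clique $C$ in $Z_4$. Writing $C=\{(w,\varphi(w)):w\in W\}$, we obtain the spreads $S_0$ and $S_\alpha$ for $\alpha\in GF(q)^*$. These are $q$ square spreads in $Z_3$ that are pairwise edge-disjoint. Two facts need checking here. The first is that each translate $\{(w,\alpha\varphi(w)+y)\}$ really lies in $Z_3$: nonorthogonality of $w-u$ and $\varphi(w)-\varphi(u)$ is preserved under scaling by $\alpha\neq 0$ and under translation by $y$. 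The second is that cliques from different spreads meet in exactly one vertex, which holds because $\alpha\varphi(w)+y=\beta\varphi(w)+y'$ has a unique solution when $\alpha\neq\beta$, since $\varphi$ is a bijection. This yields $Z_3=\sum_{\alpha\in GF(q)}S_\alpha+R$, where $R$ is the remainder and $S_0$ is now included in the sum.

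Next we apply Lemma~\ref{lem:spread} repeatedly. Removing the spreads one at a time keeps the remainder strongly regular of Latin square type, and each spread, being a disjoint union of $n$-cliques with $n=q^3$, is itself of Latin square type with $t=1$. So $Z_3$ decomposes into $q+1$ strongly regular graphs of Latin square type. We then invoke Theorem~\ref{thm:main} for the $3$-class scheme of $Z$, with $B=Z_3$ and $n=q^3$. The eigenmatrix displayed above gives $k=(q^3-q^2+1)(q^3-1)=t(n-1)$ with $t=q^3-q^2+1$, and on the idempotent $F$ of the bottom row the eigenvalue is $q^2-1=n-t$ with multiplicity $k$. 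Moreover $k\neq (n^2-1)/2$, because $t\neq (n+1)/2$. Theorem~\ref{thm:main} then gives a $(q+3)$-class scheme in which $Z_3$ is split into $q+1$ Latin square type relations, while $Z$ and $Z_2$ are kept.

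Finally we fuse some of the $q+1$ Latin square type relations together to reach exactly $d$ classes. The fusion is legitimate because $\{I,A,B_1,\dots,B_f\}$ is amorphic by the argument in the proof of Theorem~\ref{thm:main}. Any fusion of the $B_j$'s, combined with Lemma~\ref{lem:combine}, again yields a scheme, and sums of Latin square type relations remain of Latin square type. It remains to verify that $Z$ and $Z_2$ are not strongly regular, which is immediate from their three distinct restricted eigenvalues in $P$, so exactly $d-2$ relations are of Latin square type. I expect the main obstacle to be the clique/spread step: verifying that the distance-$3$ criterion (namely $w=u$, or $w'-u'\not\perp w-u$) is correct and that ovoids translate precisely into cliques $C$ of $Z_4$. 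I would first check that criterion directly from the outer-product adjacency before relying on it.
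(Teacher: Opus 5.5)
Your proposal is correct and follows essentially the same route as the paper. An ovoid of $Q^+(7,q)$ gives a $q^3$-clique in $Z_4$, hence the $q$ pairwise edge-disjoint square spreads $S_0,S_\alpha$ in $Z_3$; Lemma~\ref{lem:spread} and Theorem~\ref{thm:main} then give the $(q+3)$-class scheme, and fusing the Latin square type relations gives every $3\le d\le q+3$ for infinitely many $q$, with $d=2$ trivial. Your additional checks (scaling and translation invariance of the clique condition, cliques from different spreads meeting in one vertex, $t\neq(n+1)/2$, and $Z$, $Z_2$ not being strongly regular) only fill in details the paper leaves implicit.
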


\section{Negative Latin square type}\label{sec:NLS}

So far, the (positive) constructions do not involve strongly regular graphs of negative Latin square type. 

\subsection{A fission scheme of a Hamming cube}

De Caen and Van Dam \cite{CD} constructed (among others) a $5$-class linear fission scheme of the Hamming scheme\footnote{The Doob scheme can be fissioned similarly} $H(3,4)$ by fissioning the distance-$3$ relation into three new relations that are each isomorphic to the Hamming graph itself. Since it is linear, it corresponds to a partition of $PG(2,4)$ into $5$ sets: these are $\{(001),(010),(100)\}$ (corresponding to the Hamming graph), $\{(xyz): xyz=\alpha\}$ for $\alpha \in GF(4)^*$ and $\{(xyz):x^3+y^3+z^3=0\}$. The latter is the Hermitian unital and comprises of the words of weight $2$. Its first eigenmatrix is
$$P=\begin{bmatrix}
1& 9& 9 &9& 9& 27\\
1& 5& -3 &-3 &-3 &3 \\
1 &-3 &5& -3 &-3 &3\\
1 &-3& -3 &5 &-3 &3\\
1 &-3 &-3 &-3 &5 &3\\
1& 1 &1& 1 &1 &-5 
\end{bmatrix}.$$

This scheme has several interesting fusion schemes. By fusing one pair of the relations with valencies 9, we end up with a $4$-class scheme with two relations that are strongly regular of negative Latin square type (and precisely two others that are not). Its first eigenmatrix is

$$P=\begin{bmatrix}
1& 9& 9 & 18& 27\\
1& 5& -3 &-6 &3 \\
1 &-3 &5& -6 &3\\
1 &-3& -3 &2 &3\\
1& 1 &1& 2 &-5 
\end{bmatrix}.$$

Note that by fusing also the other two relations with valency $9$, we obtain an amorphic $3$-class scheme. Since this scheme is linear, it corresponds to a partition of the projective plane of order $4$ into three $2$-intersecting sets; in this case two hyperovals (the union of $\{(001),(010),(100)\}$ and $\{(xyz): xyz=1\}$ is the classical hyperoval) and a unital (see \cite{CD}).

Next, we will argue that we need the condition on the idempotent $F$ in Theorem \ref{thm:main}.  

Consider one of the strongly regular relations $R$ of valency $18$ in the above amorphic $3$-class scheme. Since the Hamming graph $H(3,4)$ clearly admits a spread (a disjoint union of 16 $4$-cliques) and $R$ is the union of two Hamming graphs, also $R$ admits a spread (this corresponds in fact to a single point in $PG(2,4)$). Brouwer \cite{B3} observed that by removing a spread from a pseudo-geometric strongly regular graph, one obtains a distance-regular graph with diameter $3$, in this case with $c_2=4$, 
and hence a $3$-class scheme (see also \cite{HT}). In this case, it has first eigenmatrix

\begin{equation}\label{eq:P64}
P=\begin{bmatrix}
    1& 15& 45& 3\\
    1& 3& -3 &-1 \\
    1& -1& -3 & 3\\
    1& -5& 5& -1     
\end{bmatrix}
\end{equation}
and multiplicities $1,30,15,$ and $18$.

Even though we can split the strongly regular graph with valency $45$ into multiple negative Latin square type strongly regular graphs (after all, it is the union of two relations in the amorphic scheme), we cannot use this to fission the now obtained $3$-class scheme. Indeed, if that were possible, then the new valency $n_2$ and the new intersection number $p^{1}_{12}$ would need to satisfy the equation $15p^{1}_{12}=c_2n_2=4n_2$ (by double counting), which is impossible if $n_2$ is not a multiple of $15$. Note also that $45$ is not one of the multiplicities of the scheme.

It seems challenging to obtain larger-class association schemes with precisely 2 relations that are not of negative Latin square type. Note that for a strongly regular graph of negative Latin square type with $v=n^2$ and $k=t(n-1)$, it is required that $-t \gtrsim \sqrt{-n}$ (otherwise $\lambda <0$), so we cannot have more than about $\sqrt{-n}$ disjoint negative Latin square type graphs.

\subsection{A folded halved cube - some considerations}

The folded halved $12$-cube is distance-regular with diameter $3$ for which the distance-$2$ graph is strongly regular of negative Latin square type. The first eigenmatrix of the corresponding (self-dual) association scheme is 

$$P=\begin{bmatrix}
    1& 66& 495& 462\\
    1& 26& 15 &-42 \\
    1& 2& -17 & 14\\
    1& -6& 15& -10     
\end{bmatrix}.$$

The Shi-Krotov-Sol\'{e} graph \cite{SKS} is also distance-regular with diameter $3$ and its distance-$2$ graph is strongly regular of negative Latin square type with the same parameters.
We do not know however whether we can split the corresponding strongly regular graphs. In the following, we do construct strongly regular graphs with the same parameters that do split.

 
 
By a similar method as Polhill \cite{Polhill}, we can construct an amorphic $4$-class schemes with first eigenmatrix 
$$P=\begin{bmatrix}
    1& 231& 264& 264 & 264\\
    1& -25 & 8 &8 & 8 \\
    1& 7& -24 & 8 & 8\\
    1& 7& 8& -24 & 8 \\
    1 & 7 & 8 & 8 & -24
\end{bmatrix},$$
as a (nonstandard) product of two smaller amorphic schemes. First, consider the affine plane $AG(2,8)$. Each of the nine parallel classes gives a disjoint union of eight $8$-cliques, and together these form an amorphic $9$-class scheme. By fusing, we can obtain a $4$-class scheme with relations $B_1,B_2,B_3,B_4$ with respective valencies $21,14,14,$ and $14$.

Secondly, we consider the cyclotomic $3$-class scheme on $GF(16)$ in which all relations $C_1,C_2,C_3$ are Clebsch graphs.
Now, let
\begin{align*}
&A_1 = I \otimes B_1 + C_1 \otimes B_2 + C_2 \otimes B_3 + C_3 \otimes B_4,\\
&A_2 = I \otimes B_2 + C_1 \otimes (B_1 + I) + C_2 \otimes B_4 + C_3 \otimes B_3,\\
&A_3 = I \otimes B_3 + C_1 \otimes B_4 + C_2 \otimes (B_1 + I) + C_3 \otimes B_2,\\
&A_4 = I \otimes B_4 + C_1 \otimes B_3 + C_2 \otimes B_2 + C_3 \otimes (B_1 + I),    
\end{align*}
where $\otimes$ is the Kronecker product.
These four relations form an amorphic scheme. Moreover, if we let $C=C_1$, $B=B_1+B_2$, and $A=A_1+A_2$, then
$A=(I+C)\otimes(I+B)+\overline{C}\otimes \overline{B} -I$, which is a strongly regular graph of negative Latin square type with valency $495$ that can be decomposed (by construction).

\section{Different types of strongly regular graphs in an association scheme}\label{sec:types}

In \cite[p.188]{D3}, an infinite family of commuting decompositions of the complete graph into a strictly negative Latin square type graph, a strictly Latin square type graph, and a disjoint union of complete bipartite graphs was constructed. The smallest case contains a commuting Clebsch graph and Lattice graph. It also shows that the Clebsch graph has a square spread (and the left-over graph is the $2$-clique extension of the Cube). The corresponding ``eigenmatrix" (in the sense of \cite{D3}) equals
$$P=\begin{bmatrix}
    1& 5& 3& 7 \\
    1& 1 & 3 & -5 \\
    1& 1 & -1 & -1\\
    1& -3 & 3&  -1 \\
    1& -3 & -1 & 3
\end{bmatrix}.$$
As this matrix is non-square, the decomposition is not an association scheme.
The construction raises the question whether an association scheme can have both a relation of strictly negative Latin square type and a relation of strictly Latin square type.

More generally, we may wonder if different types of strongly regular graphs can coexist in an association scheme.

\subsection{Square spreads}
Let us start to consider the smallest Latin square type graph: a square spread (that is, a disjoint union of $n$ cliques of order $n$). So, we assume that we have an association scheme in which one of the relations is a square spread. Then the scheme is imprimitive, and each other relation in the scheme has a quotient relation; and the distinct quotient relations form the quotient scheme \cite{BBIT}. Indeed, let $P$ be a matrix with columns the characteristic vectors of the cliques of the spread. If $A$ is another relation in the scheme, then there is a $b \in \mathbb{N}$ such that the quotient matrix $B=\frac{1}{n}P^{\top}AP$ equals $b \widetilde{A}$ for some $01$-matrix $\widetilde{A}$. It is well known (and easy to see) that each eigenvalue of $B$ is also an eigenvalue of $A$.

Next, we consider the case that $A$ is strongly regular. Then it follows that $\widetilde{A}$ must be complete or strongly regular. If it is a complete graph, then the quotient scheme is complete as well (i.e., it is a $1$-class scheme). We then obtain the following.

\begin{prop} Let $\mathbb{A}$ be an association scheme with a square spread such that the corresponding quotient scheme is complete. If a relation of $\mathbb{A}$ is strongly regular, then it is of Latin square type.
\end{prop}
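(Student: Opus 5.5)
The plan is to find, for every strongly regular relation $A \neq A_0$ of $\mathbb{A}$, a restricted eigenvalue $a$ with $k=-a(n-1)$, and then to apply Lemma \ref{lem:SRGkandaNLS} with $n>0$. Let $S$ be the square-spread relation, so $v=n^2$. Let $P$ be the $n^2\times n$ matrix whose columns are the characteristic vectors of the $n$ cliques, so $P^\top P=nI$ and $PP^\top=I+S$. If $A=S$, there is nothing to prove: $S$ has valency $n-1$ and restricted eigenvalues $n-1$ and $-1$, so it is of Latin square type with $t=1$. From now on I take $A\neq S$, a nontrivial relation of $\mathbb{A}$ that is strongly regular with valency $k$.

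\emph{Step 1: the quotient of $A$.} The relation ``being in the same clique'' is $I+S$. Since $A\neq I,S$, no edge of $A$ lies inside a clique. By the discussion preceding the proposition, $\frac1n P^\top A P=b\widetilde{A}$. The hypothesis that the quotient scheme is complete means that $\widetilde{A}$ is the complete graph, so
$$\tfrac1n P^\top AP=b(J_n-I_n).$$
Comparing row sums gives $k=b(n-1)$.

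\emph{Step 2: lifting eigenvectors.} Because $I+S$ lies in the Bose--Mesner algebra, it commutes with $A$. Hence the column space of $PP^\top=I+S$, which is the column space of $P$, is $A$-invariant. So $AP=PM$ for some $n\times n$ matrix $M$. Multiplying on the left by $\frac1n P^\top$ gives $M=\frac1n P^\top AP=b(J-I)$. Now take any nonzero $x\in\mathbb{R}^n$ with $x\perp\mathbf 1$; this is possible when $n\ge 2$, and the case $n=1$ is trivial. Then $A(Px)=P\,b(J-I)x=-b\,Px$, and $Px\neq 0$ because $P$ has full column rank. Also $Px\perp\mathbf 1$, so $-b$ is a restricted eigenvalue of $A$.

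\emph{Step 3: conclusion.} Set $a=-b$. Then $k=b(n-1)=-a(n-1)$ with $n>0$, and $A$ is strongly regular on $n^2$ vertices. By Lemma \ref{lem:SRGkandaNLS}, $A$ is of Latin square type.

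I expect the only delicate point to be Step 2, namely justifying $AP=PM$ rather than merely knowing that eigenvalues of the quotient are eigenvalues of $A$. I would handle it exactly as above, through commutativity with $I+S$ in the Bose--Mesner algebra. We also need $b\neq0$, so that $A$ is not empty; this holds because $k>0$.
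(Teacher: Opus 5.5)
Your proposal is correct and follows the paper's proof: a complete quotient gives $k=b(n-1)$ and a restricted eigenvalue $-b$, and then Lemma \ref{lem:SRGkandaNLS} applies with $n>0$. You also supply two details the paper leaves implicit: you justify lifting the quotient eigenvalue to a restricted eigenvalue of $A$ via $A$-invariance of the column space of $P$, and you handle the case $A=S$ separately.
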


\begin{proof}
    The quotient $\widetilde{A}$ of $A$ is a complete graph, so it has valency $n-1$ and another eigenvalue $-1$. Thus, the strongly regular graph $A$ has valency $k=b(n-1)$ and an eigenvalue $-b$. By Lemma \ref{lem:SRGkandaNLS} (note that in this case $n$ is positive), it follows that $A$ is of Latin square type. 
\end{proof}

For the other case, we have a negative result.

\begin{prop} Let $\mathbb{A}$ be an association scheme with a square spread such that the corresponding quotient scheme is not complete. If a relation of $\mathbb{A}$ is strongly regular, then it is not of negative Latin square type.
\end{prop}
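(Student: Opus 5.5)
We use the quotient matrix $B=\frac1n P^{\top}AP=b\widetilde{A}$ from the discussion above. The plan is to compare the eigenvalues of $B$ with the three eigenvalues a negative Latin square type graph on $n^2$ vertices must have, and show the valency of $\widetilde{A}$ would then be too large. Here $n>0$ is the clique size of the spread, so $v=n^2$.

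\emph{Setup.} Suppose for contradiction that a relation $A$ of $\mathbb{A}$ is strongly regular of negative Latin square type. If $A$ is the spread relation itself, it has valency $n-1$ and restricted eigenvalues $n-1$ and $-1$. This is strictly of Latin square type (it is not a conference graph since $n-1\neq (n^2-1)/2$ for $n>1$), so it is not of negative Latin square type. Hence we may assume $A$ is a different relation. Since distinct relations are disjoint, $A$ has no edges inside the cliques of the spread, so $\widetilde{A}$ has zero diagonal.

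Writing the negative Latin square parameters as $(-n,-u)$ with $u>0$, the graph $A$ has valency $k=u(n+1)$ and restricted eigenvalues $u$ and $u-n$. Note that $k<n^2$ forces $u<n$.

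\emph{Structure of $\widetilde{A}$.} The quotient $\widetilde{A}$ is a relation of the quotient scheme. If it were complete, then every other nontrivial quotient relation would have to be empty, and the quotient scheme would be complete. So by assumption $\widetilde{A}$ is strongly regular and non-complete on $n$ vertices, with valency $\tilde k\le n-2$ and restricted eigenvalues $r\ge 0>s$. We have $b\tilde k=k$, and both $br$ and $bs$ occur as eigenvalues of $B$, hence of $A$.

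\emph{Matching eigenvalues.} Since $bs<0<u$, we must have $bs=u-n$. For $br$ there are two possibilities.
\begin{itemize}
\item If $r=0$, then $0$ must be an eigenvalue of $A$, which forces $u=n$; this contradicts $u<n$.
\item So $r>0$. It cannot be that $br=u-n<0$, hence $br=u$. In particular $r=u/b$ is rational, and being an algebraic integer, $r$ is a positive integer, so $r\ge 1$.
\end{itemize}

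\emph{Contradiction.} Now
$$\tilde k=\frac{k}{b}=\frac{u(n+1)}{b}=r(n+1)\ge n+1 > n-1,$$
which is impossible for a graph on $n$ vertices. This argument also covers the case where $\widetilde{A}$ is a disjoint union of cliques with $r=\tilde k$, since then $\tilde k=\tilde k(n+1)$ is absurd.

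The main obstacle is ensuring that $\widetilde{A}$ genuinely has a nonnegative restricted eigenvalue $r$ which is an integer, so that the bound $r\ge 1$ applies. This needs the conference-graph (irrational) case to be excluded via the rationality of $u/b$, and the $r=0$ (complete multipartite) case to be excluded separately, as done above.
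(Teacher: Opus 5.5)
Your overall strategy is the paper's: the positive restricted eigenvalue $u/b$ of the quotient must be a positive integer, which forces $\tilde k=r(n+1)\ge n+1$. There is, however, a gap in the matching step. Knowing only that $br$ is an eigenvalue of $A$ gives $br\in\{k,u,u-n\}$. With $br>0$ you still have to rule out $br=k$, and you skip this when you write ``hence $br=u$''.

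This case is not vacuous. It is exactly the case $r=\tilde k$, i.e.\ $\widetilde{A}$ a disjoint union of cliques, where $br=b\tilde k=k$ certainly is an eigenvalue of $A$. Your closing remark that this case is covered because ``$\tilde k=\tilde k(n+1)$ is absurd'' is circular. That identity was derived from $br=u$, which is precisely what is in question there.

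The repair is short, but it needs something beyond ``eigenvalues of $B$ are eigenvalues of $A$''. Either of two arguments works.

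\emph{Combinatorial.} If $\widetilde{A}$ were disconnected, the union of the spread cliques in one of its components would be a union of components of $A$. Then $A$ would be a disconnected strongly regular graph, hence a disjoint union of cliques with restricted eigenvalues $k$ and $-1$. This is incompatible with restricted eigenvalues $u$ and $u-n$, since $0<u<k$.

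\emph{Spectral.} Since $A$ commutes with $PP^{\top}$, one gets $AP=PB$. The map $x\mapsto Px$ is injective and sends $\mathbf{1}^{\perp}$ into $\mathbf{1}^{\perp}$. So every restricted eigenvalue of $B$ is a restricted eigenvalue of $A$, hence lies in $\{u,u-n\}$; here $k$ is excluded because it is a simple eigenvalue of the connected graph $A$.

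With either argument, $br\in\{u,u-n\}$ and the rest of your proof goes through, including your separate treatment of $r=0$. The paper's proof relies on the same fact implicitly when it asserts that $\widetilde{A}$ has restricted eigenvalues $(u-n)/b$ and $u/b$.
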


\begin{proof}
    As the quotient scheme is not complete, the quotient $\widetilde{A}$ of $A$ must be strongly regular as well. Assume now that $A$ is of negative Latin square type, then it has valency $t(-n-1)$ and other eigenvalues $-n-t,$ and $-t$ for some $t<0$ (note that here $n>0$). Thus, $\widetilde{A}$ has valency $-t(n+1)/b$ and other eigenvalues $(-n-t)/b,$ and $-t/b$ (with $b$ as before). The latter eigenvalue is a positive rational, so it is a positive integer, so $-t \ge b$. But then the valency of $\widetilde{A}$ is at least $n+1$, which is a contradiction.
\end{proof}

We can say a bit more if the quotient scheme is not complete and $b$ takes one of the two extremal values ($1$ or $n$). First of all, if $b=n$, then $A$ is the coclique extension of $\widetilde{A}$, that is, $A=\widetilde{A} \otimes J_n$, which implies that it has an eigenvalue $0$, and hence that it is a complete multipartite graph.
Secondly, if $b=1$, then the distinct eigenvalues of $A$ and its quotient are the same, and hence so are their parameters $k$, $\lambda$, and $\mu$. But the number of vertices $v$ is not the same, and hence the (well-known) equation $k(k-1-\lambda)=\mu(v-1-k)$ implies that $\mu=0$, and hence $A$ is a disjoint union of cliques.

Both cases occur in the following example.
The wreath product of a $1$-class scheme on $n/2$ vertices and the $3$-class distance scheme of $K_{n,n}$ minus a matching (i.e., it has $3$ relations that are $n/2$ copies of the relations of the $3$-class scheme and a complete $n/2$-partite relation between the copies) gives an infinite family of $4$-class schemes with first eigenmatrix
$$P=\begin{bmatrix}
    1& (n/2-1)2n& n-1& n-1 & 1\\
    1& -2n & n-1 & n-1 & 1 \\
    1& 0 & 1 & -1 & -1\\
    1& 0 & -1&  -1& 1 \\
    1& 0 & 1-n & n-1 & -1
\end{bmatrix}.$$
Indeed, this $4$-class scheme contains a square spread, a complete multipartite graph, and a disjoint union of cliques (a matching).

\subsection{The lattice graph}

Next we consider the lattice graph. Here the obtained result is stronger than for the square spread. We first consider one of the idempotents in the scheme.

\begin{lem}\label{lem:oneE} Let $\mathbb{A}$ be an association scheme in which one of the relations is the lattice graph $L_2(n)$. Then there is only one idempotent of the scheme on which this lattice graph has eigenvalue $n-2$.    
\end{lem}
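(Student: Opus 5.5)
The plan is to show that any idempotent $E$ of the scheme with $LE=(n-2)E$ (where $L$ denotes the lattice graph relation) must equal the full spectral idempotent $M$ of $L$ for eigenvalue $n-2$. Uniqueness follows at once: the scheme idempotents on which $L$ has eigenvalue $n-2$ are mutually orthogonal and sum to $M$, and $M$ cannot be a sum of two or more of them if each one already equals $M$. The two facts used are that every column of $E$ lies in the $(n-2)$-eigenspace of $L$, and that $E$, being in the Bose-Mesner algebra, is constant on the edges of $L$ because $L$ is a single relation.

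\emph{Step 1.} Identify vertices with pairs $(a,b)\in\{1,\dots,n\}^2$. The $(n-2)$-eigenspace of $L_2(n)$ consists of the functions $f(a,b)=g(a)+h(b)$ with $\sum_a g(a)=\sum_b h(b)=0$. It has dimension $2(n-1)$ and is orthogonal to the all-ones vector. Fix a vertex $y=(a_0,b_0)$. Since $LE=(n-2)E$, the column $u_y=Ee_y$ lies in this eigenspace, so $u_y(a,b)=g(a)+h(b)$ for some such $g,h$.

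\emph{Step 2.} Because $E=\frac1v\sum_i Q_{ij}A_i$ and $L$ is one of the $A_i$, the entry $E_{zy}$ takes one common value $\beta$ for all $z$ adjacent to $y$ in $L$, that is, for all $z$ in the same row or column as $y$. Taking $z=(a_0,b)$ with $b\neq b_0$ gives $g(a_0)+h(b)=\beta$, so $h$ is constant on $b\ne b_0$. The condition $\sum h=0$ then forces $h(b)=-h(b_0)/(n-1)$ for those $b$. In the same way, $g(a)=-g(a_0)/(n-1)$ for $a\ne a_0$. Write $p=g(a_0)$ and $r=h(b_0)$. Equating the row-neighbour value $p-r/(n-1)$ with the column-neighbour value $r-p/(n-1)$ gives $(p-r)\frac{n}{n-1}=0$, hence $p=r$. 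Therefore $u_y$ is $p$ times a fixed vector depending only on $y$, namely the normalized function $(a,b)\mapsto[a=a_0]+[b=b_0]-\tfrac{2}{n}$.

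\emph{Step 3.} That fixed vector is $n$ times the column $Me_y$, since $nM=I\otimes J+J\otimes I-\frac2nJ$. So $Ee_y=c_yMe_y$ for each $y$. Because $E$ has constant diagonal, $c_y=c$ is independent of $y$, and so $E=cM$. From $E^2=E\neq0$ and $M^2=M$ we get $c=1$, hence $E=M$, which gives the lemma.

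The only delicate point is Step 2: one must use that \emph{row} and \emph{column} neighbours of $y$ lie in the same relation of the scheme. This is exactly where the hypothesis that the lattice graph itself, and not merely the two square spreads forming it, is a relation enters. It is also what rules out a splitting of the eigenspace into the row part and the column part.
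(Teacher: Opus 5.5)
Your proof is correct and follows essentially the same route as the paper: you write a column of the idempotent as $g(a)+h(b)$ with $g,h$ summing to zero, then use that the idempotent is constant on the edges of the lattice graph to force $g$ and $h$ to be constant away from the base vertex. The only cosmetic difference is that you obtain $g(a_0)=h(b_0)$ by equating the row-neighbour and column-neighbour values, whereas the paper writes $n^2F=mI+\theta A-\frac{m}{n-1}(J-I-A)$ and fixes $\theta$ from the zero row sums, which amounts to the same condition.
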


\begin{proof} We let the vertices of the scheme be represented as a lattice: $V=\{(i,j): i,j=1,2,\dots,n\}$, where adjacency of the given lattice graph $L_2(n)$ is given by matching first or second coordinate (as usual), and let $A$ be the corresponding adjacency matrix. It is well-known that $A$ has (spectral, or $2$-class) idempotent $E$ for eigenvalue $n-2$ given by $$n^2E=(2n-2)I+(n-2)A-2(J-I-A).$$ More specifically, each eigenvector is the sum of two vectors, where one is constant on the horizontal lines of the lattice and orthogonal to each of the vertical lines\footnote{The sum of entries on each vertical line is zero}, and for the second it is just the other way around. To put it differently, for each eigenvector $f$, there are vectors $h$ and $v$ orthogonal to the all-ones vector such that $f_{(i,j)}=h_i+v_j$ for $i,j=1,2,\dots,n$.

    Now let $F$ be an idempotent of $\mathbb{A}$ of rank $m$ for which the lattice graph has eigenvalue $n-2$, and consider its column $f$ for vertex $(1,1)$. Since this is an eigenvector for $n-2$, we have that $f_{(i,j)}=h_i+v_j$, where $h$ and $v$ are orthogonal to the all-ones vector. Because $F$ has constant diagonal and trace $m$, it follows that $h_1+v_1=m/n^2$. Moreover, $n^2F \circ A= \theta A$ for some dual eigenvalue $\theta$. By considering the horizontal neighbors of $(1,1)$, this implies that $v_2=v_3=\cdots=v_n$, and by orthogonality to the all-ones vector, it follows that $v_j=-v_1/(n-1)$ for $j \neq 1$. Similarly, it follows that $h_i=-h_1/(n-1)$ for $i \neq 1$, and hence $f_{(i,j)}=h_i+v_j=-(h_1+v_1)/(n-1)=-m/(n^2(n-1))$ for $i \neq 1$ and $j \neq 1$. In particular, $f$ is constant on the non-neighbors of $(1,1)$. As this holds analogously for all vertices and corresponding columns of $F$ (with the same constants), it follows that $n^2F=mI +\theta A - \frac{m}{n-1}(J-I-A)$. From the fact that $F$ has row sums zero, it now follows that $\theta=\frac{m(n-2)}{2(n-1)}$, hence $F$ is a scalar multiple of $E$, and so $F=E$.
\end{proof}

Let us now look at bit closer (and more generally) into the eigenspace for eigenvalue $n-2$.
Let $A$ be a strongly regular graph of Latin square type or negative Latin square type, say with valency $t(n-1)$ and restricted eigenvalues $n-t$ and $-t$, and $B$ be a strongly regular graph with valency $k$ and restricted eigenvalues $r$ and $s$, with $r-s$ having the same sign as $n$. If $A$ and $B$ commute, then they share eigenvectors for eigenvalues $n-t$ and $s$ \cite[Lemma 2]{D3}. Moreover, it follows from \cite[Lemma 2]{D3} that the dimension of the common eigenspace for eigenvalues $n-t$ and $r$ equals $-t\frac{k+(n-1)s}{r-s}$. This implies that $A$ and $B$ share an eigenvector for eigenvalues $n-t$ and $r$ unless $k=-s(n-1)$, and by Lemma \ref{lem:SRGkandaNLS} this exception is exactly the case when $B$ is of the same type as $A$.
Thus, the above lemma implies the following.

\begin{prop} Let $\mathbb{A}$ be an association scheme in which one of the relations is a lattice graph. If a relation of $\mathbb{A}$ is strongly regular, then it is of Latin square type.
\end{prop}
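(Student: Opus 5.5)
The plan is to combine Lemma \ref{lem:oneE} with the eigenspace-sharing argument described just before the statement. Let $A$ be the lattice graph $L_2(n)$. It is of Latin square type with $t=2$, valency $2(n-1)$ and restricted eigenvalues $n-2$ and $-2$. Its eigenspace for $n-2$ is the image of the spectral idempotent $E$. By Lemma \ref{lem:oneE}, $E$ is itself a minimal idempotent of the scheme $\mathbb{A}$.

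Now let $B$ be a strongly regular relation of $\mathbb{A}$ with valency $k$ and restricted eigenvalues $r>s$. Since $n>0$, $r-s$ has the same sign as $n$. Because $E$ is a minimal idempotent of the scheme, $B$ acts on the whole column space of $E$ as a single scalar, namely $P_{E,B}$, which is $r$ or $s$. Hence the common eigenspace of $A$ and $B$ for the pair of eigenvalues $(n-2,r)$ or the one for $(n-2,s)$ is trivial.

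Since $A$ and $B$ commute, the result quoted from \cite[Lemma 2]{D3} applies. It says that $A$ and $B$ always share eigenvectors for $n-2$ and $s$, which excludes the second alternative. So the common eigenspace for $(n-2,r)$ must be trivial, that is, $-t\frac{k+(n-1)s}{r-s}=0$. As $t=2\neq 0$, this gives $k=-s(n-1)$. Lemma \ref{lem:SRGkandaNLS}, applied with $a=s$ and $n>0$, then shows that $B$ is of Latin square type.

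The step I expect to need the most care is the claim that the $(n-2,s)$ common eigenspace is nonzero. Its dimension, by the same counting from \cite[Lemma 2]{D3}, is the multiplicity $2(n-1)$ of $n-2$ minus the $(n-2,r)$ dimension. I would check that this difference is positive, using the multiplicity of $r$ and the standard bound $k+(n-1)s\le$ something appropriate. If that bound fails, I would instead argue directly that $B$ restricted to the column space of $E$ equals $sE$ whenever the $(n-2,r)$ dimension is zero. In either case exactly one of the two common eigenspaces vanishes, which is all the argument needs, and the case where it is the $(n-2,s)$ space is the one to rule out.

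A degenerate case must also be handled: $B$ complete multipartite or a union of cliques, with eigenvalue $0$ or $-1$. Here the formulas still apply as long as $B$ is strongly regular, meaning neither complete nor empty.
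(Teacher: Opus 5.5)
Your proof is correct and is essentially the paper's argument. The paper runs it contrapositively: if $B$ is not of Latin square type, then both common eigenspaces for $(n-2,r)$ and $(n-2,s)$ are nonzero (the latter by \cite[Lemma 2]{D3}), so at least two idempotents of $\mathbb{A}$ have eigenvalue $n-2$ for the lattice graph, contradicting Lemma \ref{lem:oneE}.

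The step you flagged follows directly from your own count: the $(n-2,s)$ dimension is $2(n-1)+2\frac{k+(n-1)s}{r-s}=2\frac{k+(n-1)r}{r-s}$, which is positive because $k>0$ and $r\ge 0$.
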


\begin{proof}
    By the above considerations, if $A$ is the lattice graph and $B$ a strongly regular graph that is not of Latin square type, then there are at least two idempotents that have eigenvalue $n-2$ for $A$, that is, at least one for each of the distinct restricted eigenvalues of $B$. This contradicts Lemma \ref{lem:oneE}. 
\end{proof}

We suspect that this proposition holds more generally by replacing the lattice graph by Latin square graphs, but we have not been able to prove this.

We note that the association scheme with eigenmatrix \eqref{eq:P64} has a negative Latin square type graph as one of its relations, and the idempotent for eigenvalue $n-t=-3$ indeed splits for the restricted eigenvalues of another strongly regular relation (not of negative Latin square type).

The main question remains whether there is any association scheme that has both a strictly Latin square type graph and a strictly negative Latin square type graph as two of its relations.

\section*{Acknowledgement}  
Jack Koolen is supported by the National Natural Science Foundation of China (No. 12471335). 
Yanzhen Xiong is supported by the National Natural Science Foundation of China (No. 12501502) and Innovation Research Foundation of College of Science at National University of Defense Technology (202501-YJRC-LXY-01).

\end{document}